\newcommand{\R}{\mathbb R}
\newcommand{\Z}{\mathbb Z}
\DeclareMathOperator{\argmax}{argmax}
\newcommand{\conv}{\mathop{\mathrm{conv}}}
\newtheorem{proposition}{Proposition}
\newtheorem{remark}{Remark}
\newtheorem{definition}{Definition}
\newtheorem{observation}{Observation}
\newtheorem{theorem}{Theorem}
\newcounter{mynotes}
\newcommand{\remove}[1]{}
\title{The Strength of Multi-row Aggregation Cuts for Sign-pattern Integer Programs}
\author[1]{Santanu S. Dey\thanks{santanu.dey@isye.gatech.edu}}
\author[1]{Andres Iroume\thanks{airoume.gatech@gmail.com}}
\author[1]{Guanyi Wang\thanks{gwang93@gatech.edu}} 
\affil[1]{\small School of Industrial and Systems Engineering, Georgia Institute of Technology} 
\begin{document}

\maketitle

\begin{abstract}
In this paper, we study the strength of \emph{aggregation cuts} for \emph{sign-pattern} integer programs (IPs). Sign-pattern IPs are a generalization of packing IPs and are of the form $\{x\in \Z^n_+\ | \ Ax\le b\}$ where for a given column $j$, $A_{ij}$ is either non-negative for all $i$ or non-positive for all $i$. Our first result is that the \emph{aggregation closure} for such sign-pattern IPs can be {\it 2-approximated} by the \emph{original 1-row closure}. This generalizes a result for packing IPs from \cite{bodur2016aggregation}. On the other hand, unlike in the case of packing IPs, we show that the \emph{multi-row aggregation closure} cannot be well approximated by the \emph{original multi-row closure}. Therefore for these classes of integer programs general aggregated multi-row cutting planes can perform significantly better than just looking at cuts from multiple original constraints.
\end{abstract}

\section{Introduction}

In a recent paper \cite{bodur2016aggregation}, Bodur et al. studied the strength of \emph{aggregation cuts}. An aggregation cut is obtained as follows: (i) By suitably weighing and adding the constraints of a given integer programming (IP) formulation, one can obtain a  relaxation which is defined by a single constraint together with variable bounds. (ii) All the valid inequalities for the integer hull of this knapsack-like set are called as aggregation cuts. The set obtained by adding all such aggregation cuts (for all possible aggregations) is called the \emph{aggregation closure}. Such cuts are commonly used in practice by state-of-the-art solvers \cite{wolsey1975faces,zemel1978lifting,weismantel19970,marchand2001aggregation,fukasawa2011exact} and also have been studied in theory~\cite{marchand2001aggregation,andersen2010zero,DeyR08}. A very special subclass of the aggregation cuts are the cuts obtained from original constraints of the formulation as the knapsack-like relaxation (i.e. no aggregation at all). The weaker closure obtained from such cuts is called as the \emph{original 1-row closure} \cite{bodur2016aggregation}.  The paper \cite{bodur2016aggregation} shows that for packing and covering IPs, the aggregation closure can be {\it 2-approximated} by the original 1-row closure. In contrast, they show that for general IPs, the aggregation closure can be arbitrarily stronger than the original 1-row closure. 

The aggregation cuts are obtained based on our ability to generate valid inequalities for feasible sets described by one non-trivial constraint and variable bounds. Recently there has a large body of work on multi-row cuts, for example~\cite{DeyR08,AndersenLWW07,DeyW10,LouveauxW08,CornuejolsM09,BorozanC09,BasuHK17,BasuHKM13,DeyR10,AverkovB15}. Also see the review articles~\cite{RichardD10,BasuHK16a,BasuHK16} and analysis of strength of these cuts~\cite{BasuBCM09,AverkovBP17, bodur2017lower}. Therefore, it is natural to consider the notion of multi-row aggregation cuts. Essentially by using $k$ different set of weights on the constraints of the problem one can produce a relaxation that involves $k$ constraints together with variable bounds. We call the valid inequalities for the integer hull of such relaxations as $k$-row or multi-row aggregation cuts. Analogous to the case of aggregation cuts, we can also define the notion of $k$-row aggregation closure and the original $k$-row closure~\cite{bodur2016aggregation} (i.e. generate cuts from all relaxations described by $k$ constraints from the IP formulation). The results in~\cite{bodur2016aggregation} can be used to show that for packing and covering IPs the $k$-row aggregation closure can be approximated by the original $k$-row closure within a multiplicative factor that depends only on $k$. (We obtain sharper bounds for the case of $k = 2$ in this paper.)

For packing and covering IPs all the coefficients of all the variables in all the constraints have the same sign. Therefore when we aggregate constraints we are not able to ``cancel'' variables, i.e., the support of an aggregated constraint is exactly equal to the union of supports of the original constraints used for the aggregation. A natural conjecture for the fact that the aggregation closure (resp. multi-row aggregation closure) is well approximated by the original 1-row closure (resp. original multi-row closure) for packing and covering problems, is the fact that such cancellations do not occur for these problems. Indeed one of the key ideas used to obtain good candidate aggregations in the procedure described in \cite{marchand2001aggregation} is to use aggregations that maximize the chances of cancellation. Also see~\cite{andersen2010zero}, which discusses the strength of split cuts obtained from aggregations that create cancelations. 

In order to study the effect of cancellations, we study the strength of aggregation closures vis-\`a-vis original row closures for \emph{sign-pattern IPs}. A sign-pattern IP is a problem  of the form $\{x\in \Z^n_+\ | \ Ax\le b\}$ where a given variable has exactly the same sign in every constraint, i.e. for a given column $j$, $A_{ij}$ is either non-negative for all rows $i$ or non-positive for all rows $i$. Thus aggregations do not create cancellations. 

Our study reveals interesting results. On the one hand we are able to show that the aggregation closure for such sign-pattern IPs is {\it 2-approximated} by the original 1-row closure, supporting the conjecture that non-cancellation implies that aggregation is less effective. On the other hand, unlike packing and covering IPs, the multi-row aggregation closure cannot be well approximated by the original multi-row closure. So for these classes of problems, multi-row cuts may do significantly better than single-row cuts, especially those obtained by aggregation.

The structure of the rest of the paper is as follows. In Section \ref{sip:def} we provide formal definitions and statements of our results. In Section \ref{spi:proofs} we present the proofs for our results.

\section{Definitions and statement of results}\label{sip:def}

\subsection{Definitions}

For an integer $n$, we use the notation $[n]$ to describe the set $\left\{ 1,\ldots,n \right\}$ and for $k\le n$ non-negative integer, we use the notation ${[n] \choose k}$ to describe all subsets of $[n]$ of size $k$. For $i \in [n]$, we denote by $e_i$ the $i^{th}$ vector of the standard basis of $\R^n$. The convex hull of a set $S$ is denoted as $\conv (S)$. For a set $S\subset \R^n$ and a positive scalar $\alpha$ we define $\alpha S := \left\{ \alpha u\ |\ u \in S  \right\} $. We use $P^I$ to denote the convex hull of integer feasible solutions of $P$ (i.e. the integer hull of $P$). For a given linear objective function, we let $z^{LP}$, $z^{IP}$ denote the optimal objective function over $P$ and $P^I$ respectively. 

\subsubsection{Sign-pattern IPs}

\begin{definition}
Let $n$ be an integer, let $J^+,J^-\subset [n]$ such that $J^+ \cap J^- = \emptyset$ and $J^+ \cup J^- = [n]$.
We call a polyhedron $P$, a $(J^+,J^-)$ sign-pattern polyhedron if it is of the form
\begin{align*}
P &= \left\{  x\in \R^n_+  \  | \   \sum_{j \in J^+} A_{ij} x_j  - \sum_{j \in J^-} A_{ij} x_j \le b_i \quad \forall i \in [m]  \right\},
\end{align*}
where $A_{ij} ,b_i\ge 0,\ \forall i \in [m], \forall j\in [n]$. Additionally, we require $A_{ij}\le b_i\ \forall j \in J^+,\ \forall i\in [m]$.
\end{definition}

Through out this paper, whenever we refer to a packing polyhedron or a covering polyhedron, we make a similar assumption on the coefficients defining the polyhedron, i.e., if $P:= \{x \in \mathbb{R}^n_{+} | Ax \leq b\}$ is a packing (resp. $P:= \{x \in \mathbb{R}^n_{+}| Ax \geq b\}$ is a covering) polyhedron we assume $A_{ij} \leq b_i$ for all $i \in [2],\ j \in [n]$. Also we assume that all data is rational.

\begin{definition}\label{def:alphapack}
Given two polyhedra $P$ and $Q$ contained in $\R^n_+$ such that $P\supseteq Q \supseteq \{0\}$, and a positive scalar $\alpha \geq 1$, we say that $P$ is an $\alpha$-approximation of $Q$ if $P \subset \alpha  Q$.
\end{definition}

\begin{remark}\label{rem:alphapack}
Let $P\supseteq Q \supseteq \{0\}$. Suppose we are maximizing a linear objective over $P$ and $Q$ and let the optimal objective function over $P$ and $Q$ be $z^P$ and $z^Q$ respectively. Then $P$ being an $\alpha$-approximation of $Q$ implies that either $z^P = z^Q = \infty$ or $z^P \geq z^Q \geq \frac{1}{\alpha}\cdot z^P$. Therefore, in order to show $P$ is not an $\alpha$-approximation of $Q$, all we need to do is to establish that there is a linear objective such that $z^P, z^Q < \infty$ and $z^Q < \frac{1}{\alpha}\cdot z^P$.
\end{remark}

\begin{remark}
Definition \ref{def:alphapack} does not hold for covering polyhedron. We will refer to $\alpha$-approximation results for covering polyhedron for comparison. Given two covering polyhedra $P$ and $Q$ such that $P\supseteq Q$, and a positive scalar $\alpha \geq 1$, we say that $P$ is an $\alpha$-approximation of $Q$ if $P \subset \frac{1}{\alpha} Q$. This is equivalent to saying that if we are minimizing a non-negative linear function over $P$ and $Q$ (and the optimal objective function over $P$ and $Q$ are $z^P$ and $z^Q$ respectively), then $z^P \leq z^Q \leq \alpha\cdot z^P$.

\end{remark}
\subsubsection{Closures}

Given a polyhedron $P$, we are interested in cuts for the pure integer set $P\cap \Z^n$. 

\begin{definition}
For $P=\left\{ x\ge 0 \ | \  Ax\le  b  \right\}$, where $A \in \mathbb{R}^{m \times n}$, $k\ge 1$ integer, and $\lambda_1,\ldots,\lambda_k \in \mathbb{R}^m_+$, let
\begin{align*}
P(\lambda_1,\ldots,\lambda_k) &:=\left\{ x\ge 0\ | \ {\lambda_1}  Ax\le  {\lambda_1} b,\ldots,  {\lambda_k}  Ax\le  {\lambda_k} b  \right\}. \\
P^I(\lambda_1,\ldots,\lambda_k ) &:= \conv \left( \left\{ x\in \Z^n_+\ | \  {\lambda_1}  Ax\le  {\lambda_1} b,\ldots, {\lambda_k}  Ax\le  {\lambda_k} b  \right\} \right).
\end{align*}
\end{definition}

\begin{definition}[Closures]
Given a polyhedron $P=\left\{ x\in \R^n_+ \ |\ Ax\le b \right\}$, where $A \in \mathbb{R}^{m \times n}$, we define its aggregation closure $\mathcal{A}(P)$ as
\begin{align*}
\mathcal{A}(P) = \bigcap\limits_{\lambda \in \R^m_+} P^I(\lambda).
\end{align*}

We can generalize the \emph{aggregation-closure} to consider simultaneously $k$ aggregations, where $k \in \Z$ and $k \ge 1$. More precisely, for a polyhedron $P$ the \emph{k-aggregation} closure is defined as
\begin{align*}
\mathcal{A}_k(P) &:= \bigcap_{\lambda_1,\ldots, \lambda_k \in \R^m_+} P^I(\lambda_1,\ldots, \lambda_k).
\end{align*}

Similarly, the original 1-row closure $\it{1}\textnormal{-}\mathcal{A}(P)$ is defined as
\begin{align*}
\it{1}\textnormal{-}\mathcal{A}(P) &:=  \bigcap_{ i \in [m] }  P^I(e_{i} ).
\end{align*}

We can generalize the original 1-row closure, to the original $k$-row closure $k\textnormal{-}\mathcal{A}(P)$. More precisely, for a polyhedron $P$ the original $k$-row closure is defined as
\begin{align*}
k\textnormal{-}\mathcal{A}(P) &:=  \bigcap_{\left\{i_1,\ldots,i_k\right\}\in {[m] \choose k} }  P^I(e_{i_1},\ldots,e_{i_k} ).
\end{align*}
\end{definition}

Given a linear objective function, we let $z^{\mathcal{A}}$, $z^{\mathcal{A}_k}$, $z^{\it{1}\textnormal{-}\mathcal{A}(P)}$, and $z^{k\textnormal{-}\mathcal{A}(P)}$ be the optimal objective function over $\mathcal{A}$, $\mathcal{A}_k$, $\it{1}\textnormal{-}\mathcal{A}(P)$ and $k\textnormal{-}\mathcal{A}(P)$ respectively.

\subsection{Statement of results}

The first result compares the aggregation closure with the LP relaxation of a $(J^+,J^-)$ \emph{sign-pattern} polyhedron.

\begin{theorem}
\label{thm:ag}
For a $(J^+,J^-)$ sign-pattern polyhedron $P$, we have that $\mathcal{A}(P)$ can be 2-approximated by $P$, and thus by $1\textnormal{-}\mathcal{A}(P)$.
\end{theorem}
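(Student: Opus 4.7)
The containment $\mathcal{A}(P) \subseteq P$ is immediate: taking $\lambda = e_i$ gives $\mathcal{A}(P) \subseteq P^I(e_i) \subseteq P(e_i)$, and intersecting over $i \in [m]$ yields $\mathcal{A}(P) \subseteq P$; the same argument shows $\it{1}\textnormal{-}\mathcal{A}(P) \subseteq P$. By Definition~\ref{def:alphapack}, what remains is to prove $P \subseteq 2\mathcal{A}(P)$, which will also deliver $P \subseteq 2\cdot \it{1}\textnormal{-}\mathcal{A}(P)$ via $\mathcal{A}(P) \subseteq \it{1}\textnormal{-}\mathcal{A}(P)$.

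Fix $\bar{x} \in P$ and an arbitrary $\lambda \in \R^m_+$; the goal is to show $\bar{x}/2 \in P^I(\lambda)$. Set $\alpha_j := \sum_i \lambda_i A_{ij} \geq 0$ for $j \in J^+$, $\beta_j := \sum_i \lambda_i A_{ij} \geq 0$ for $j \in J^-$, and $c := \sum_i \lambda_i b_i \geq 0$, so that
\[
P(\lambda) = \Big\{x \in \R^n_+ : \sum_{j \in J^+} \alpha_j x_j - \sum_{j \in J^-} \beta_j x_j \leq c\Big\}.
\]
Applying the same non-negative combination to the inequalities $A_{ij} \leq b_i$ (valid on $J^+$) yields the crucial structural bound $\alpha_j \leq c$ for every $j \in J^+$. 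Note that the sign-pattern hypothesis is exactly what prevents cancellations in $\alpha_j$ and preserves this bound.

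I would then invoke Minkowski--Weyl on the polyhedron $P(\lambda)$ to decompose $\bar{x} = \sum_l \mu_l v_l + r$, where the $v_l$ are vertices of $P(\lambda)$, $\mu_l \geq 0$, $\sum_l \mu_l = 1$, and $r \in \rec(P(\lambda))$. Reading off the vertices from the single non-trivial inequality, they are exactly $0$ together with $(c/\alpha_j)\, e_j$ for each $j \in J^+$ with $\alpha_j > 0$; every $e_j$ with $j \in J^-$ (or with $j \in J^+$ and $\alpha_j = 0$) sits in $\rec(P(\lambda))$. Halving gives $\bar{x}/2 = \sum_l \mu_l (v_l/2) + r/2$. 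For the non-trivial vertex $(c/\alpha_j)\, e_j$, the bound $\alpha_j \leq c$ forces $c/\alpha_j \geq 1$, and the elementary inequality $s/2 \leq \lfloor s \rfloor$ for $s \geq 1$ then gives $c/(2\alpha_j) \leq \lfloor c/\alpha_j \rfloor$; hence $(c/(2\alpha_j))\,e_j$ lies on the segment from $0$ to $\lfloor c/\alpha_j \rfloor\,e_j$, both integer feasible points of the aggregated knapsack, and so $v_l/2 \in P^I(\lambda)$.

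Finally, $\rec(P^I(\lambda)) = \rec(P(\lambda))$: each integral generator of $\rec(P(\lambda))$ (namely $e_j$ with $j \in J^-$ or with $j \in J^+, \alpha_j = 0$) maps integer feasible points to integer feasible points. Thus $r/2 \in \rec(P^I(\lambda))$, and translating the convex combination $\sum_l \mu_l (v_l/2) \in P^I(\lambda)$ by $r/2$ keeps us inside $P^I(\lambda)$. Consequently $\bar{x}/2 \in P^I(\lambda)$ for every $\lambda$, i.e.\ $\bar{x}/2 \in \mathcal{A}(P)$. The main obstacle is the vertex-halving step: verifying that the scaled vertex $(c/(2\alpha_j))\,e_j$ lies in the integer hull of the knapsack, which is precisely where the sign-pattern bound $A_{ij} \leq b_i$ becomes indispensable, mirroring its role in the packing analysis of~\cite{bodur2016aggregation}.
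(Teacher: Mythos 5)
Your proposal is correct in substance and shares the paper's overall skeleton: reduce to a single aggregated constraint, observe that the sign-pattern bound $A_{ij}\le b_i$ survives aggregation as $\alpha_j\le c$ for $j\in J^+$, deduce $P(\lambda)\subseteq 2P^I(\lambda)$, and intersect over all $\lambda$ (the paper's Observation~\ref{obs:prevpaper}). Where you genuinely diverge is in how the single-row containment is proved. The paper first shows that the integer hull of a sign-pattern polyhedron is again a sign-pattern polyhedron (a facet argument), then argues by contradiction on a violated facet, using LP duality and a boundedness result of Meyer to reduce everything to the bound $z^{LP}\le 2z^{IP}$ for sign-pattern objectives (Propositions~\ref{prop:one}--\ref{sp_one}). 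You instead work primally: decompose $\bar x\in P(\lambda)$ into a convex combination of vertices plus a recession direction, halve each piece, and verify membership in $P^I(\lambda)$ directly. Your route avoids the facet analysis and the duality/boundedness detour, which is arguably cleaner and more self-contained; the paper's route isolates reusable facts about sign-pattern knapsacks that it wants anyway. The key computation is identical in both: the only non-trivial vertices are $(c/\alpha_j)e_j$ with $c/\alpha_j\ge 1$, and $s/2\le\lfloor s\rfloor$ for $s\ge 1$ yields the factor $2$.

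One step of yours is misstated, though it is repairable without changing the architecture. You claim that $\rec(P(\lambda))$ is generated by the unit vectors $e_j$ with $j\in J^-$ or with $j\in J^+,\ \alpha_j=0$. That is false: for the constraint $x_1-x_2\le 1$ with $x\ge 0$ the recession cone is $\left\{x\ge 0\ :\ x_1\le x_2\right\}$, which requires the additional generator $(1,1)$. So the justification you give for $r/2\in\rec(P^I(\lambda))$ does not cover all recession directions. The fact you actually need, $\rec(P^I(\lambda))=\rec(P(\lambda))$, is nevertheless true for a rational polyhedron with nonempty integer hull (Meyer's theorem, which the paper itself invokes without proof when showing $P^I$ is a sign-pattern polyhedron); alternatively, note that the rational cone $\rec(P(\lambda))$ is generated by \emph{integral} vectors, each of which translates integer feasible points to integer feasible points, which is all your argument requires. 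With that correction or citation in place, the proof goes through.
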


This result generalizes the result obtained in \cite{bodur2016aggregation} for the case of packing IPs. Since the ratio of $2$ is already known to be tight for the case of packing instances \cite{bodur2016aggregation}, the result of Theorem~\ref{thm:ag} is tight.

Next we show that, in general, for $(J^+,J^-)$  \emph{sign-pattern} IPs, the \emph{aggregation-closure} does not do a good job at approximating the $2$-aggregation closure.

\begin{theorem}
\label{thm:bad}
There is a family of $(J^+,J^-)$ sign-pattern polyhedra for which $\mathcal{A}$ is an arbitrarily bad approximation to $\mathcal{A}_2$, i.e. for each $\alpha > 1$, there is a $(J^+,J^-)$ sign-pattern polyhedron $P$ such that $\mathcal{A}(P)$ is not an $\alpha$-approximation of $\mathcal{A}_2(P)$. In contrast, if $P$ is a packing (resp. covering) polyhedron, then $\mathcal{A}(P) \subseteq 3(\mathcal{A}_2(P))$ (resp. $\mathcal{A}(P) \subseteq \frac{1}{2.5}(\mathcal{A}_2(P))$).
\end{theorem}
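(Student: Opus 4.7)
The claim splits into a lower bound (the bad family) and two upper bounds (packing and covering). I treat each in turn.

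\textbf{Plan for the bad family.} The guiding intuition is the paper's ``no cancellation'' observation: under sign-pattern, any single aggregation $\lambda^T A x \le \lambda^T b$ is again a sign-pattern inequality, so every $j \in J^-$ still appears with a non-positive coefficient and can be driven arbitrarily large to absorb slack. Hence $P^I(\lambda)$ can barely restrict the $J^+$-projection beyond what $P$ already does. By contrast, $\mathcal{A}_2(P)$ retains two aggregated constraints simultaneously, so each $J^-$-variable must absorb slack in \emph{both} inequalities at once, and this coupling enables genuine $2$-dimensional lattice-free cuts. My plan is to construct a parametric family $\{P_N\}_{N\ge 1}$ with $J^+=\{1,2\}$, $J^-=\{3\}$, and two ``tilted'' constraints of the shape
\begin{align*}
a^{(i)}_1 x_1 + a^{(i)}_2 x_2 - N x_3 \le b_i, \qquad i=1,2,
\end{align*}
designed so that at each fixed integer $x_3=k$ the joint two-row LP optimum in $(x_1,x_2)$ is a fractional vertex inside a two-dimensional lattice-free triangle (giving a strong cut), while for any single aggregation $\lambda$ one can, by sending $x_3$ large, recover an integer point whose $J^+$-objective is $\Omega(N)$. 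Paired with a linear objective of the form $x_1+x_2-\gamma x_3$ for a suitable $\gamma>0$ to make the LP bounded, this should give $z^{\mathcal{A}}(P_N)=\Omega(N)$ but $z^{\mathcal{A}_2}(P_N)=O(1)$, driving the ratio to infinity.

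\textbf{Plan for the packing and covering bounds.} The plan is to lift the $2$-approximation argument of \cite{bodur2016aggregation} from one aggregated row to two. Fix $\bar x \in \mathcal{A}(P)$ and any $(\lambda_1, \lambda_2) \in \R^m_+ \times \R^m_+$, and let $(p, \alpha) := (\lambda_1^T A, \lambda_1^T b)$, $(q, \beta) := (\lambda_2^T A, \lambda_2^T b)$, which are all non-negative because $P$ is packing. Since $\bar x \in P^I(\lambda_1) \cap P^I(\lambda_2)$, we have $p^T \bar x \le \alpha$ and $q^T \bar x \le \beta$, along with two separate convex decompositions of $\bar x$ as averages of integer points feasible for each single aggregated constraint. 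The goal is to exhibit $\bar x/3$ as a convex combination of integer points feasible for \emph{both}. My plan is a three-way split: two of the three shares of $\bar x/3$ are obtained by rounding along each aggregated constraint in turn (using the single-row $2$-approximation within each), and the third share is placed at the origin, which is feasible for both constraints since $\alpha,\beta\ge 0$. Averaging the three recovers $\bar x/3$ inside the joint integer hull, yielding $\mathcal{A}(P) \subseteq 3\,\mathcal{A}_2(P)$. The covering case is structurally identical with inequalities flipped; the improved factor $2.5$ comes from the fact that for covering, rounding up is always safe, so the ``free'' slot can be partially absorbed into the two rounded shares, giving a $\tfrac{2}{5}$-$\tfrac{2}{5}$-$\tfrac{1}{5}$ split rather than a balanced third.

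\textbf{Main obstacle.} In Part~2 the subtle step is the \emph{coupled} rounding: in the one-row proof each coordinate can be rounded independently, but here a rounding that preserves $p^T y \le \alpha$ can violate $q^T y \le \beta$. I would handle this by partitioning $[n]$ according to whether the ratio $p_j/q_j$ lies above or below a global threshold determined by $(\alpha, \beta, p^T \bar x, q^T \bar x)$, then rounding each block in a direction simultaneously safe for both aggregated constraints. For Part~1 the difficulty is constructive: choosing two sign-pattern rows so that (a)~every single-aggregation integer hull still admits a large-objective point, (b)~the two-row integer hull forbids all large-objective points, and (c)~the sign-pattern restrictions $A_{ij}\ge 0$ and $A_{ij}\le b_i$ for $j\in J^+$ are respected. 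Step~(a) must be verified \emph{uniformly} over all $\lambda \in \R^m_+$, which is the main technical hurdle.
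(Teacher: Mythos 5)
Your proposal has genuine gaps in both halves, and in each case you have flagged the hard step yourself without resolving it. For the bad family, you plan a three-variable construction with two tilted rows and then must verify, \emph{uniformly over all} $\lambda\in\R^m_+$, that $P^I(\lambda)$ still contains a high-objective integer point; you correctly call this the main technical hurdle, but you give neither the concrete coefficients nor the uniform argument, so the lower bound is not established. The paper sidesteps this entirely: since Theorem \ref{thm:ag} already gives $P\subseteq 2\mathcal{A}(P)$, one gets $z^{\mathcal{A}}\ge z^{LP}/2$ for free, with no per-$\lambda$ analysis; and by using a polyhedron with only $m=2$ constraints one gets $\mathcal{A}_2(P)=P^I$ trivially. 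Hence all that is needed is a two-constraint sign-pattern instance with unbounded ratio $z^{LP}/z^{IP}$, e.g. $\max\{x_1-(M-1)x_2 \ : \ x_1 - M(M-1)x_2\le 1,\ x_1\le M+1,\ x\ge 0\}$, where $z^{LP}=M$ and $z^{IP}=2$. Your plan, even if completed, would be doing much more work than necessary.

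For the packing/covering bounds, your reduction is in essence the right one: fixing $\bar x\in\mathcal{A}(P)$ and $\lambda_1,\lambda_2$, the statement $\bar x/3\in P^I(\lambda_1,\lambda_2)$ is exactly the assertion that the two-row aggregated LP relaxation $P(\lambda_1,\lambda_2)$ is contained in $3P^I(\lambda_1,\lambda_2)$, which is the paper's Proposition \ref{prop:packcover}. But your proposed proof of that key lemma --- a three-way convex split with one share at the origin and two shares obtained by ``coupled rounding'' --- is exactly where you concede the argument breaks (a rounding safe for $p^\top y\le\alpha$ may violate $q^\top y\le\beta$), and the ratio-threshold fix you sketch is not worked out. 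The paper instead exploits that an optimal vertex of a two-row LP has at most two nonzero coordinates, and (for covering) combines this with strong duality and complementary slackness in a case analysis on the sizes of those two coordinates; the packing case is quoted from \cite{bodur2016aggregation}. Your ``$\tfrac{2}{5}$--$\tfrac{2}{5}$--$\tfrac{1}{5}$ split'' heuristic for the constant $2.5$ does not correspond to any step of that analysis and would need its own justification. As it stands, neither half of the theorem is proved.
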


The previous result shows that for non-trivial $(J^+,J^-)$ \emph{sign-pattern} polyhedra, using multiple row cuts can have significant benefits over one row cuts.  This is different than for the case of packing/covering problems (where the improvement is bounded).

The next result shows that  the \emph{aggregation-closure} considering simultaneously $2$ aggregations ($\mathcal{A}_2$) can be arbitrarily stronger than the original $2$-row closure ($2\textnormal{-}\mathcal{A}$).

\begin{theorem}
\label{thm:bad2}
There is a family of $(J^+,J^-)$ sign-pattern polyhedra with 4 constraints for which $\it{2}\textnormal{-}\mathcal{A}$ is an arbitrarily bad approximation to $\mathcal{A}_2$, i.e. for each $\alpha > 1$, there is a $(J^+,J^-)$ sign-pattern polyhedron $P$ such that $\it{2}\textnormal{-}\mathcal{A}(P)$ is not an $\alpha$-approximation of $\mathcal{A}_2(P)$. In contrast, if $P$ is a packing (resp. covering) polyhedron, then $2\textnormal{-}\mathcal{A}(P)  \subseteq 3(\mathcal{A}_2(P))$ (resp. $2\textnormal{-}\mathcal{A}(P)  \subseteq \frac{1}{2.5}(\mathcal{A}_2(P))$).
\end{theorem}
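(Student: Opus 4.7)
The plan is to prove Theorem~\ref{thm:bad2} by an explicit construction. For every $\alpha > 1$ I build a sign-pattern polyhedron $P = P(N)$ with exactly four constraints (parameterized by an integer $N$ chosen large in terms of $\alpha$), together with a linear objective $c^\top x$, satisfying two properties: (i) there is a pair of aggregation weights $\lambda^{(1)}, \lambda^{(2)} \in \R_+^4$ whose joint integer hull $P^I(\lambda^{(1)}, \lambda^{(2)})$ bounds $c^\top x$ above by some value $z^*$, giving $z^{\mathcal{A}_2(P)} \leq z^*$; and (ii) there is a point $\bar x$ lying in $P^I(e_i, e_j)$ for every pair $\{i, j\} \in {[4] \choose 2}$ and satisfying $c^\top \bar x \geq \alpha z^*$, so that $z^{2\textnormal{-}\mathcal{A}(P)} \geq \alpha z^*$. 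Combined with Remark~\ref{rem:alphapack} these two properties yield the desired arbitrarily bad approximation ratio.

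The guiding intuition is as follows. Sign-pattern IPs do not admit coefficient cancellations under aggregation, which is precisely what makes $\mathcal{A}(P)$ well-approximated by $1\textnormal{-}\mathcal{A}(P)$ in Theorem~\ref{thm:ag}. However, integer-hull information can still be ``diluted'' across several rows: no individual row need be restrictive on its own, no pair of two original rows need encode the integer structure that becomes visible only after summation, yet a pair of \emph{aggregated} knapsacks can jointly pin the relevant integer variables to small values. The natural choice is $\lambda^{(1)} = (1,1,0,0)$ and $\lambda^{(2)} = (0,0,1,1)$, pairing the four rows into two aggregated knapsacks whose joint integer hull is strong. The four original rows are then designed as ``dilute'' versions of these two aggregated knapsacks, so that each row individually is loose and the bad point $\bar x$ remains integer-feasible against every one of the six pairs of original rows.

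The main technical obstacle is precisely this balance. The aggregated inequalities $\lambda^{(1)} A x \leq \lambda^{(1)} b$ and $\lambda^{(2)} A x \leq \lambda^{(2)} b$ must jointly rule out $\bar x$ in $\Z^n_+$, while each of the six pairs $P^I(e_i, e_j)$ must still admit it. The most delicate pairs are $\{1,2\}$ and $\{3,4\}$ themselves, since $P^I(e_1, e_2)$ is automatically at least as strong as the single-row integer hull $P^I(\lambda^{(1)})$; this forces the construction to use the \emph{joint} strength of the two aggregations rather than either one on its own, so that excluding $\bar x$ requires the two aggregated rows simultaneously. Verifying (i) then reduces to computing the integer hull of a clean two-row knapsack, whereas verifying (ii) amounts to writing down $\bar x$ explicitly and checking integer feasibility against each of the six pairs of original rows, which becomes a routine case analysis once the coefficients are chosen. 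Finally, the contrasting packing and covering inclusions stated at the end of the theorem follow by arguments analogous to, and sharpening, the $k$-row techniques in \cite{bodur2016aggregation} for $k = 2$, combined with the trivial containment $\mathcal{A}_2(P) \subseteq 2\textnormal{-}\mathcal{A}(P)$.
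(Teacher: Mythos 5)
Your plan takes the same route as the paper's proof: an explicit four-constraint, four-variable family, the aggregation pair $\lambda^{(1)}=(1,1,0,0)$, $\lambda^{(2)}=(0,0,1,1)$, and a fractional point certified to lie in every $P^I(e_{i_1},e_{i_2})$ with objective value far above the bound imposed by $P^I(\lambda^{(1)},\lambda^{(2)})$. Your structural remarks are also sound; in particular the observation that $P^I(e_1,e_2)\subseteq P^I(\lambda^{(1)})$ correctly identifies why the construction must use the two aggregations \emph{jointly}, and the paper's instance does exactly that. The problem is that, as submitted, this is an outline and not a proof: the entire mathematical content of the first assertion of Theorem~\ref{thm:bad2} is the existence of a concrete instance, and you never exhibit one. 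There are no coefficients, no objective, no point $\bar x$, and neither of your properties (i) and (ii) is verified for anything. Nothing in the plan guarantees that the competing requirements you describe (each of the six pairs of original rows admits $\bar x$ in its integer hull, while the two aggregated rows jointly exclude it by a factor $\alpha$) can be met simultaneously; that tension is precisely what the theorem asserts can be resolved, and it is only resolved by producing and checking an instance.

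For comparison, the paper's instance (for even $M\ge 2$) has objective $x_1-\tfrac{M}{2}(x_2+x_3+x_4)$, the three constraints $x_1-Mx_i-Mx_j\le 1$ for each pair $\{i,j\}\subset\{2,3,4\}$, and $x_1\le M+1$. The point $\left(M+1,\tfrac12,\tfrac12,\tfrac12\right)$ is shown to lie in each $P^I(e_{i_1},e_{i_2})$ as the midpoint of $(M+1,0,0,0)+e_l$ and $(M+1,1,1,1)-e_l$ for a suitably chosen index $l$ depending on the pair, giving $z^{2\textnormal{-}\mathcal{A}}=\tfrac{M}{4}+1$; and a case analysis on the value of $x_1$ in the two-row aggregated knapsack $P^I(\bar\lambda,\bar\mu)$ shows $z^{\mathcal{A}_2}=1$. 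Those two verifications, especially the case analysis for the upper bound, are where essentially all the work lies, and they are entirely absent from your write-up. One further small correction on the packing/covering part: the containment $\mathcal{A}_2(P)\subseteq 2\textnormal{-}\mathcal{A}(P)$ that you invoke is true but points the wrong way; what is actually needed is the chain $2\textnormal{-}\mathcal{A}(P)\subseteq P\subseteq P(\lambda_1,\lambda_2)\subseteq 3\,P^I(\lambda_1,\lambda_2)$ (resp.\ with $\tfrac{1}{2.5}$), where the last step is the two-row LP-to-IP gap bound of Proposition~\ref{prop:packcover}, followed by intersecting over all $\lambda_1,\lambda_2$.
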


The previous results establish the comparison between the sets $\it{1}\textnormal{-}\mathcal{A}(P)$ and $\mathcal{A}(P)$, $\mathcal{A}(P)$ and $\mathcal{A}_2(P)$, and $2\textnormal{-}\mathcal{A}(P)$ and $\mathcal{A}_2(P)$. These results are presented in Table \ref{tab:cont}. 

\begin{table}[ht]
\begin{center}
\caption{Upper bound (lower bound for covering case) on $\alpha$ for various containment relations; m is the number of constraints.}
\begin{tabular}{llccccc}
\hline
&  Packing & Covering & Sign-pattern  \\
\hline
 \vspace{0.3cm}
$1\textnormal{-}\mathcal{A}(P) \subset \alpha \mathcal{A}(P)  $ 								 & $\alpha \leq 2$ (\cite{bodur2016aggregation})		 & $\alpha \geq \frac{1}{2}$ (\cite{bodur2016aggregation})			& $\alpha \leq 2$ \\
 \vspace{0.1cm}
$\mathcal{A}(P) \subset \alpha  \mathcal{A}_2(P)$ 	 
& 
$\alpha \leq 3$ \ if $m\ge 2$	 
& 
$\alpha \geq \frac{1}{2.5}$ \ if $m\ge 2$ 	
&  	
$\alpha \leq \infty$ \quad if $m\ge 2$\\ 
 \\
$2\textnormal{-}\mathcal{A}(P)  \subset \alpha \mathcal{A}_2(P) $  
& 
$\alpha \leq \left\{ \begin{tabular}{ll} 
$1$ & if $m=2$\\
$3$  & if $m\ge 3$ \\ 
\end{tabular} \right.$ 		 
& 
$\alpha \geq \left\{ \begin{tabular}{ll} 
$1$ & if $m=2$\\
$\frac{1}{2.5}$  & if $m\ge 3$ \\ 
\end{tabular} \right.$ 
	 	
& 
$\alpha\leq \left\{ \begin{tabular}{ll} 
$1$ & if $m=2$\\
$?$ & if $m=3$\\
$\infty$ & if $m\ge 4$\\ 
\end{tabular} \right.$ 	 \\
\hline
\end{tabular}

\label{tab:cont}
\end{center}
\end{table}

%
%

\section{Proofs} \label{spi:proofs}

\subsection{Proof of Theorem \ref{thm:ag}}

First, we need some general properties for $(J^+,J^-)$ \emph{sign-pattern} LPs. 

\begin{proposition}\label{prop:one}
Consider a $(J^+,J^-)$ \emph{sign-pattern} polyhedron defined by one non-trivial constraint 
$P=\left\{x\ge 0\ | \ \sum_{j \in J^+}a_jx_j - \sum_{j \in J^-}a_jx_j  \le b\right\}$ and let $c$ be a vector with the same sign-pattern, i.e. $c_j\ge 0\ \forall j \in J^+$ and $c_j\le 0\ \forall j\in J^-$.  Then:
\begin{enumerate}
\item $z^{LP}=\max\limits_{x\in P} c^\top x$ is bounded if and only if $\max\limits_{j \in J^+} \frac{c_j}{a_j} \le \min\limits_{j \in J^-} \frac{-c_j}{a_j}$.
\item If $z^{LP}$ is bounded, then there exists an optimal solution $x^{LP}$ such that $x^{LP}_j = b/a_j$ for $j \in \argmax_{j\in J^+} c_j/a_j $ and $x^{LP}_k=0$ for $k\in [n]\backslash \left\{j\right\}$.
\item If $z^{LP}$ is bounded, then $z^{LP}\le 2z^{IP}$, where $z^{IP}=\max\limits_{x\in P^I} c^\top x$.
\end{enumerate}
\end{proposition}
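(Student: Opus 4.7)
The plan is to handle the three parts via LP duality for Parts~1 and~2 and via an explicit rounding for Part~3. First I would write down the dual: $\min yb$ subject to $y a_j \ge c_j$ for $j \in J^+$, $-y a_j \ge c_j$ for $j \in J^-$, and $y \ge 0$. Using the sign conventions on $c$ together with the non-negativity of $a$, these constraints collapse to the simultaneous bounds $y \ge c_j/a_j$ for $j \in J^+$ and $y \le -c_j/a_j$ for $j \in J^-$. Dual feasibility is then exactly the stated condition $\max_{j \in J^+} c_j/a_j \le \min_{k \in J^-} -c_k/a_k$; since the primal is trivially feasible (take $x=0$), LP duality delivers Part~1. (Degenerate cases such as $J^+=\emptyset$, $c=0$, or $c_j=0$ for some $j$ should be noted in passing; they fall into this same framework with $z^{LP}=0$.)

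For Part~2, I would observe that under this feasibility condition the dual minimum is attained at $y^\star = \max_{j \in J^+} c_j/a_j$, with value $y^\star b$. Picking any $j^\star \in \argmax_{j \in J^+} c_j/a_j$ and setting $x^\star := (b/a_{j^\star}) e_{j^\star}$ gives a primal feasible point with objective $c_{j^\star} b / a_{j^\star} = y^\star b$. By LP duality both are optimal, yielding the claimed form of $x^{LP}$.

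Part~3 is where the factor of $2$ enters. My plan is to round $x^\star$ coordinatewise down, obtaining $\tilde x := \lfloor b/a_{j^\star}\rfloor e_{j^\star} \in P \cap \Z^n_+$. The sign-pattern definition supplies the bound $a_{j^\star} \le b$, which forces $b/a_{j^\star} \ge 1$. Combined with the elementary estimate $\lfloor t \rfloor \ge t/2$ valid for all $t \ge 1$, and with $c_{j^\star} \ge 0$, this gives $c^\top \tilde x \ge \tfrac12 \, c_{j^\star} b/a_{j^\star} = z^{LP}/2$, and hence $z^{LP} \le 2 z^{IP}$.

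I do not foresee a genuine obstacle: once one recognizes from Part~2 that the LP optimum can be realized on a single $J^+$ coordinate, the $J^-$ variables play no role in Part~3 and the rounding reduces to the one-dimensional estimate above. The most delicate point is really conceptual, namely identifying that the sign-pattern hypothesis $a_{j^\star} \le b$ is precisely what makes $\lfloor b/a_{j^\star}\rfloor \ge 1$ and therefore salvages a constant-factor guarantee (with the ratio $2$ being tight in the limit as $b/a_{j^\star} \to 2^-$).
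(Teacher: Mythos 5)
Your proof is correct and follows essentially the same route as the paper: LP duality for the boundedness criterion in Part~1, reduction of the optimum to a single $J^+$ coordinate in Part~2, and rounding down together with $a_{j^\star}\le b$ (hence $b/a_{j^\star}\ge 1$ and $\lfloor t\rfloor \ge t/2$ for $t\ge 1$) in Part~3. The only cosmetic difference is in Part~2, where you certify optimality of $(b/a_{j^\star})e_{j^\star}$ by matching its value against the dual optimum $y^\star b$, whereas the paper argues that an extreme point of a polyhedron with one non-trivial constraint has at most one nonzero coordinate; both arguments are standard and yield the same conclusion.
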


\begin{proof}
Clearly $0\in P$, thus the $(J^+,J^-)$ \emph{sign-pattern} LP cannot be infeasible. Consider its dual
$$\textup{min}\{ by \ | \  a_j y \ge c_j \    \forall j \in J^+, \ a_j y \le -c_j \    \forall j \in J^-, \  y\ge 0\},$$
which is feasible if and only if $\max\limits_{j \in J^+} \frac{c_j}{a_j} \le \min\limits_{j \in J^-} \frac{-c_j}{a_j}$.

If $z^{LP}$ is bounded, then there exists an optimal solution that is an extreme point. Since the problem is defined by a single non-trivial constraint, each extreme point can have at most one non-zero coefficient, thus a maximizer $x^{LP}$ over the set of extreme points must be of the form $x^{LP}_{j^*}=b/a_{j^*}\ge 1$ for some $j^*\in J^+$ and  $x^{LP}_{j}=0\ \forall j \in [n]\backslash \left\{j^*\right\}$.

Clearly $\lfloor{x^{LP}\rfloor}\in P^I$, thus $\frac {z^{LP} }{ z^{IP} }\le \frac{  b/a_{j^*}  }{    \lfloor{ b/a_{j^*}  \rfloor}     }$. Finally, since $P$ is a $(J^+,J^-)$ \emph{sign-pattern} polyhedron $b/a_{j^*} \ge 1$ and thus $\frac {z^{LP} }{ z^{IP} }\le 2$.
\end{proof}


\begin{proposition}
Consider a $(J^+,J^-)$ \emph{sign-pattern} polyhedron $P$, then $P^{I}$ is also a $(J^+,J^-)$ sign-pattern polyhedron.
\end{proposition}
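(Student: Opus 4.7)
The plan is to exhibit a description of $P^I$ in which every non-trivial inequality has the required sign pattern and satisfies the side condition $A'_{ij} \le b'_i$ for $j \in J^+$. Since $P$ has rational data, $P^I$ is a rational polyhedron, so it suffices to show that every valid inequality $\alpha^\top x \le \beta$ for $P^I$ is dominated by a sign-pattern-compatible valid inequality together with the non-negativities $x \ge 0$.

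For the easy direction, I would first observe that for each $j \in J^-$ one has $e_j \in \rec(P)$: adding $\lambda e_j$ only subtracts the non-negative quantity $\lambda A_{ij}$ from the left-hand side of each constraint. Since $\rec(P^I)=\rec(P)$ for rational polyhedra, this forces $\alpha_j \le 0$ in every valid inequality.

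The main step is the analogous statement for $j \in J^+$: if $\alpha^\top x \le \beta$ is valid for $P^I$ and $\alpha_j < 0$, then replacing $\alpha_j$ by $0$ yields an inequality that is still valid. To prove it, I would fix $x \in P^I$, decompose it as a convex combination of integer points of $P$ plus a non-negative combination of rays in $\rec(P)$, and apply the coordinate zero-out map $y \mapsto y - y_j e_j$ termwise. Because $A_{ij} \ge 0$ for $j \in J^+$, zeroing out the $j$-th coordinate only relaxes each original constraint, so this map sends integer points of $P$ to integer points of $P$ and rays of $\rec(P)$ to rays of $\rec(P)$. Consequently $\tilde{x} := x - x_j e_j \in P^I$, and the hypothesized bound $\alpha^\top \tilde{x} \le \beta$ rearranges to exactly the validity of the modified inequality at $x$. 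I expect this coordinate zero-out argument to be the crux; the rest is essentially bookkeeping, and the role of the sign assumption on $J^+$ is concentrated here.

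Iterating the two facts above, every valid inequality for $P^I$ is a non-negative combination of a sign-pattern-compatible valid inequality together with non-negativities $x_j \ge 0$, so $P^I$ admits a description using only such inequalities. Finally, $\beta \ge 0$ follows from $0 \in P^I$, and the side condition $A'_{ij} \le b'_i$ for $j \in J^+$ follows by plugging in $e_j$: the original assumption $A_{ij} \le b_i$ for $j \in J^+$ gives $e_j \in P \cap \Z^n_+ \subseteq P^I$, so any sign-pattern inequality $\alpha^\top x \le \beta$ valid for $P^I$ satisfies $\alpha_j = \alpha^\top e_j \le \beta$, completing the verification that $P^I$ is a $(J^+,J^-)$ sign-pattern polyhedron.
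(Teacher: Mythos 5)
Your proof is correct and follows essentially the same route as the paper's: the recession-cone argument $e_j \in \rec(P) = \rec(P^I)$ handles $j \in J^-$, and the coordinate zero-out map $x \mapsto x - x_j e_j$ (which keeps integer points in $P$ precisely because $A_{ij} \ge 0$ for $j \in J^+$) handles $j \in J^+$, exactly as in the paper's facet-based version of the same argument. You additionally verify the side condition $A'_{ij} \le b'_i$ by plugging in $e_j \in P^I$, a detail the paper leaves implicit.
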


\begin{proof}
First, since $P$ is a  $(J^+,J^-)$ sign-pattern polyhedron, $0,e_1,\ldots,e_n\in P$, then $P^I$ is a non-empty full-dimensional polyhedron. We show that for every non-trivial facet $ax\le b$ (that is facets other than $x_j \geq 0$), we must have $a_j\ge 0$ for $j\in J^+$ and $a_j\le 0$ for $j\in J^-$.

For $j\in J^-$. Note that the recession cone of $P^I$ is the same as the recession cone of $P$. Then for every facet $ax\le b$ we have $a_j\le 0$ for $j \in J^-$ (otherwise $e_j$ would not be in the recession cone of $P^I$). 

For $j \in J^+$, assume that there exists a facet $ax\le b$ s.t. $a_j<0$. Consider $a'=a-a_j e_j$ (we zero out the $j$-th component), if $a'x\le b$ is valid, it corresponds to a stronger non-trivial constraint than $ax\le b$. In order to show that it is valid, assume that there exists $x\in P\cap \Z^n$ s.t. $a'x>b$. Note that $x_j\ge 1$ (since otherwise $a'x=ax\le b$). Consider $x'=x-x_je_j$ (clearly $x'\in P\cap \Z^n$), then $b<a'x=ax' \le b$, a contradiction.
\end{proof}

\begin{proposition}
\label{sp_one}
Let $P$ be a  $(J^+,J^-)$ sign-pattern polyhedron defined by one constraint, then $P\subset 2P^{I}$.
\end{proposition}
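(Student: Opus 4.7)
The plan is to establish $P\subset 2P^I$ by reducing to an extreme-point calculation via the Minkowski--Weyl decomposition. Since $0\in P$ and $P\subseteq\R^n_+$, the polyhedron $P$ is pointed, so every $x\in P$ admits a decomposition $x=\sum_i\lambda_i v_i + r$, where the $v_i$ are the extreme points of $P$, $\lambda_i\ge 0$ with $\sum_i\lambda_i=1$, and $r\in\rec(P)$. Halving gives $x/2=\sum_i\lambda_i(v_i/2)+r/2$, so it suffices to prove (a) $v_i/2\in P^I$ for every extreme point $v_i$, and (b) $r/2\in\rec(P^I)$.

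For (a), I would begin by characterizing the extreme points. Because $P$ has a single non-trivial constraint plus the $n$ nonnegativity constraints, any vertex has at most one nonzero coordinate; moreover if this nonzero coordinate lies in $J^-$, the active constraint forces a negative value, which is infeasible. Thus (as foreshadowed by Proposition~\ref{prop:one}(2)) the extreme points are exactly $0$ and the points $(b/a_j)e_j$ with $j\in J^+$ and $a_j>0$. For $v_i=0$ there is nothing to check. For $v_i=(b/a_j)e_j$, put $n:=b/a_j$; the sign-pattern assumption $a_j\le b$ gives $n\ge 1$, and writing $n=\floor{n}+f$ with $f\in[0,1)$ one checks $n/2\le\floor{n}$ (because $\floor{n}\ge 1$ and $f<1$ together yield $n\le 2\floor{n}$). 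Therefore
\[
\frac{v_i}{2}=\frac{n}{2}e_j=\frac{n/2}{\floor{n}}\bigl(\floor{n}e_j\bigr)+\Bigl(1-\frac{n/2}{\floor{n}}\Bigr)\cdot 0,
\]
which exhibits $v_i/2$ as a convex combination of the integer feasible points $0$ and $\floor{n}e_j$, so $v_i/2\in P^I$.

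For (b), invoking Meyer's theorem on the rational polyhedron $P$ (nonempty in integers since $0\in P\cap\Z^n$) gives $\rec(P^I)=\rec(P)$, and because $\rec(P^I)$ is a cone, $r/2\in\rec(P^I)$. Combining (a) and (b): $\sum_i\lambda_i(v_i/2)\in P^I$ by convexity of $P^I$, and adding the recession direction $r/2$ preserves membership in $P^I$, so $x/2\in P^I$, i.e., $x\in 2P^I$.

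The step I expect to be the most delicate is the extreme-point classification, since it is what isolates the axis-aligned structure that the halving argument relies on; once that is in hand, the hypothesis $a_j\le b$ built into the definition of a sign-pattern polyhedron is precisely the quantitative input that forces a halved vertex to be dominated by an integer feasible point on the same axis. An alternative route via a separating hyperplane together with Proposition~\ref{prop:one}(3) is possible, but it requires an additional reduction to handle objectives whose $J^+$-components have mixed signs, and is therefore less direct than the Minkowski--Weyl argument above.
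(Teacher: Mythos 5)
Your proof is correct, but it takes a genuinely different route from the paper's. The paper argues by contradiction: it first shows (in a separate proposition) that $P^I$ is itself a $(J^+,J^-)$ sign-pattern polyhedron, so every non-trivial facet normal $a$ of $P^I$ has the sign pattern required by Proposition~\ref{prop:one}; it then takes a point $x'\in\frac{1}{2}P$ violating such a facet $ax\le b$, notes that the IP $\max_{x\in P^I}ax$ is bounded (hence, by Meyer, so is the LP), and derives $ax'\le\frac{1}{2}z^{LP}\le z^{IP}\le b$ from Proposition~\ref{prop:one}(3) --- a contradiction. This is precisely the ``alternative route'' you flag at the end, and the ``additional reduction to handle objectives with mixed signs'' you anticipate is exactly the paper's intermediate proposition on the sign pattern of $P^I$'s facets. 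Your primal Minkowski--Weyl argument instead exhibits explicit integer certificates: each halved vertex $(b/2a_j)e_j$ is a convex combination of $0$ and $\floor{b/a_j}e_j$ (using $a_j\le b$ to get $b/a_j\ge 1$, hence $b/a_j\le 2\floor{b/a_j}$), and recession directions carry over via $\rec(P^I)=\rec(P)$ from Meyer's theorem on the rational polyhedron $P$. Your version is more self-contained --- it bypasses both the facet sign-pattern proposition and the LP duality analysis of Proposition~\ref{prop:one} --- while the paper's version is shorter given that this machinery has already been built and will be reused. Both approaches lean on Meyer's theorem at the same pressure point (the unboundedness caused by the $J^-$ directions), just packaged differently: you use it for the recession cone identity, the paper uses it to transfer boundedness from the IP to the LP. Your vertex classification and the handling of the degenerate cases ($j\in J^-$ or $a_j=0$, which collapse to the origin) are sound.
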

\begin{proof}
Assume by contradiction that there exists $x' \in \frac{1}{2}P$ s.t. $x' \notin P^I$. Since $P^I$ is a $(J^+,J^-)$ \emph{sign-pattern} polyhedron, each non-trivial facet-defining inequality $ax\le b$ satisfies $a_j\ge 0\ \forall j \in J^+$ and $a_j\le 0\ \forall j \in J^-$. Since $x'\notin P^I$, for one of these facets, we have: $ax'>b$. Now, if we consider $a$ as an objective: $\max_{x\in P^I} ax \le b$ and thus defines a bounded problem. 

Since the IP is feasible and bounded and $P$ is defined by rational data, the LP is also bounded (see \cite{Meyer1974}). Hence by Proposition \ref{prop:one}, $ax'\le \frac{1}{2} z^{LP} \le z^{I} \le b  $ a contradiction.
\end{proof}

\begin{observation}
\label{obs:prevpaper}
Let $\left\{S_i\right\}_{i\in I}$ be a collection of subsets in $\mathbb{R}^n$ and let $\alpha \in \mathbb{R}_{+}$. Then $\alpha\left(\cap_{i \in I} S_i\right) = \cap_{i\in I} \alpha(S_i)$.
\end{observation}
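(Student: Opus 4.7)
The observation is a purely set-theoretic distributivity statement between scaling and intersection, so my plan is to prove equality by establishing both inclusions directly from the definitions, treating $\alpha > 0$ and $\alpha = 0$ as separate cases.

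For the forward inclusion $\alpha\bigl(\bigcap_{i \in I} S_i\bigr) \subseteq \bigcap_{i \in I} \alpha(S_i)$, I would take any $x$ in the left-hand side, write $x = \alpha y$ for some $y \in \bigcap_{i \in I} S_i$, and then note that $y \in S_i$ for every $i$ implies $\alpha y \in \alpha(S_i)$ for every $i$. This half does not require any hypothesis on $\alpha$ beyond $\alpha \in \R_+$ and in fact holds even for arbitrary real $\alpha$.

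The reverse inclusion $\bigcap_{i \in I} \alpha(S_i) \subseteq \alpha\bigl(\bigcap_{i \in I} S_i\bigr)$ is the one that requires a small argument. Assuming $\alpha > 0$, I would take $x \in \bigcap_{i \in I} \alpha(S_i)$ and observe that for each $i$ there exists $y_i \in S_i$ with $x = \alpha y_i$. Since $\alpha$ is nonzero, each $y_i$ equals the common element $y := x/\alpha$, so $y \in S_i$ for every $i$, i.e. $y \in \bigcap_{i \in I} S_i$, and therefore $x = \alpha y$ lies in the right-hand side. The degenerate case $\alpha = 0$ I would either handle by a one-line remark (both sides equal $\{0\}$ when the intersection is non-empty, and both equal $\emptyset$ otherwise) or, more likely, simply assume $\alpha > 0$ since that is the only case used subsequently in the paper.

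I do not anticipate any real obstacle here; the only subtlety worth flagging is the need to use $\alpha > 0$ (equivalently, injectivity of scalar multiplication by $\alpha$) in the reverse inclusion, without which one cannot conclude that the preimages $y_i$ of $x$ in the different sets coincide.
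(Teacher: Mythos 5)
Your proof is correct; the paper states this as an observation without proof, and your two-inclusion argument (with the key point that $\alpha>0$ gives injectivity of scaling, so all the preimages $y_i$ coincide) is exactly the standard justification one would supply. Note that the paper's definition of $\alpha S$ already restricts to positive scalars, so your $\alpha=0$ case is not needed but does no harm.
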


Now, we prove Theorem \ref{thm:ag}.

\begin{proof}
By definition, we have that $P \subset P(\lambda),\ \forall\lambda \in \mathbb{R}^m_+$. Since $ P(\lambda)$ corresponds to a $(J^+,J^-)$ \emph{sign-pattern} polyhedron defined by one constraint, by Proposition \ref{sp_one}, we have that $ P(\lambda) \subset 2 P^I(\lambda)$. 
Then taking intersection over all $\lambda \in \R^m_+$ and by Observation \ref{obs:prevpaper} we have
\begin{align*}
P \subset \bigcap_{\lambda \in \R^m_+}  P(\lambda) &\subset \bigcap_{\lambda \in \R^m_+}  2P^I(\lambda) = 2 \bigcap_{\lambda \in \R^m_+}  P^I(\lambda) =2\mathcal{A}(P).
\end{align*}

Since $1\textnormal{-}\mathcal{A}(P)$ is contained in $P$, we have that $1\textnormal{-}\mathcal{A}(P)$ is a {\it 2-approximation} of $\mathcal{A}(P)$.
\end{proof}

\subsection{Proof of Theorem \ref{thm:bad}}
In order to prove the first part of Theorem \ref{thm:bad}, consider the following family of  $(J^+,J^-)$ \emph{sign-pattern} polyhedra and $M\ge 2$ integer
\begin{eqnarray}\label{eq:p1}
\begin{array}{rll}
\max &\quad x_1 - (M-1)x_2 &  \\
\textup{s.t.} &\quad x_1 - M(M-1) x_2 &\le 1 \\
&\quad x_1 \qquad\qquad\quad & \le M+1 \\
&\quad x_1,x_2\ge 0.
\end{array} 
\end{eqnarray}
Note that the only integral solutions are $(0,0)$, $(1,0)$ and $(\bar{x}_1,\bar{x}_2)$, where $\bar{x}_1,\bar{x}_2\in \Z_+$, $\bar{x}_1\le M+1$ and $\bar{x}_2\ge 1$. Therefore, $z^{IP} = 2$ and corresponds to $(M+1,1)$. 
Additionally, $z^{LP}=M$ since the feasible point $\left(M+1,\frac{1}{M-1}\right)$ achieves that objective value and by multiplying the first constraint by $\frac{1}{M}$, multiplying the second constraint by $\frac{M - 1}{M}$ and addition them we obtain the valid inequality: $x_1 - (M-1)x_2 \leq M$.
Thus, in this case we have that: $\frac{z^{LP}}{z^{IP}} = \frac{M}{2}$.
Trivially, since we have only two constraints $\mathcal{A}_2(P) = P^I$. Now, we present our proof of the first part of Theorem \ref{thm:bad}.

\begin{proof}
 It follows from Theorem \ref{thm:ag} that for any $(J^+,J^-)$ \emph{sign-pattern} polyhedron $\frac{z^{LP}}{z^{\mathcal{A}}} \in [1,2] $. For the family of $(J^+,J^-)$ \emph{sign-pattern} polyhedra (\ref{eq:p1}) we have that $z^{IP}=z^{\mathcal{A}_2}$ and therefore
\begin{align*}
\frac{z^{\mathcal{A}}}{z^{\mathcal{A}_2}} = \frac{z^{LP}}{z^{IP}} \cdot \frac{z^{\mathcal{A}}}{z^{LP}}  \ge \frac{M}{2} \frac{1}{2}. 
\end{align*}
Since $M$ can be arbitrarily large, $\mathcal{A}(P)$ cannot be an {\it$\alpha$-approximation} of $\mathcal{A}_2(P)$ for any finite value of $\alpha$.
\end{proof}

In order to prove the second part of Theorem \ref{thm:bad} we need the following result regarding packing and covering integer programs.  
\begin{proposition}\label{prop:packcover}
Let $P:= \{x \in \mathbb{R}^n_{+} | Ax \leq b\}$ be a packing (resp. $P:= \{x \in \mathbb{R}^n_{+}| Ax \geq b\}$ be a covering) polyhedron defined by two non-trivial constraints, i.e. $A \in \mathbb{R}^{2 \times n}$, satisfying the property $A_{ij} \leq b_i$ for all $i \in [2],\ j \in [n]$. Then $P \subseteq 3P^I$ (resp. $P\subseteq \frac{1}{2.5}P^I$).
\end{proposition}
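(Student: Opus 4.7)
The plan is to reduce the containment $P \subseteq 3 P^I$ to a check on the finitely many extreme points of $P$. Since $P$ has only two non-trivial constraints (plus non-negativity), every extreme point $v$ of $P$ has support of size at most two. By the Minkowski--Weyl representation, any $x^* \in P$ can be written as $x^* = \sum_i \lambda_i v_i + d$ with $v_i$ extreme points of $P$, $\lambda_i \ge 0$ summing to $1$, and $d$ in the common recession cone of $P$ and $P^I$ (using rationality of the data). Hence it suffices to show $v/3 \in P^I$ for each extreme point $v$; the general claim follows by convexity together with the fact that recession directions of $P^I$ may be added freely to points of $P^I$.

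The heart of the proof is a short case analysis on an extreme point $v = s e_i + t e_j$. The hypothesis $A_{k\ell} \le b_k$ guarantees $e_i, e_j \in P$, and by monotonicity of packing constraints $\lfloor s\rfloor e_i + \lfloor t\rfloor e_j$ also lies in $P$, since $A_{ki}\lfloor s\rfloor + A_{kj}\lfloor t\rfloor \le A_{ki} s + A_{kj} t \le b_k$ for both rows $k$. When $s, t \ge 1$ I would exhibit the identity
\begin{align*}
\frac{v}{3} = \frac{1}{3}\bigl(\lfloor s\rfloor e_i + \lfloor t\rfloor e_j\bigr) + \frac{s-\lfloor s\rfloor}{3}\, e_i + \frac{t-\lfloor t\rfloor}{3}\, e_j + \Bigl(1 - \tfrac{1+(s-\lfloor s\rfloor)+(t-\lfloor t\rfloor)}{3}\Bigr)\cdot 0,
\end{align*}
which is a valid convex combination in $P^I$ because both fractional parts lie in $[0,1)$. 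When $s \ge 1 > t$, one has $\lfloor t\rfloor = 0$, and I would instead combine $\lfloor s\rfloor e_i$, $e_j$, and $0$; the bound $\lfloor s\rfloor \ge s/2$ (valid for $s \ge 1$) ensures the two non-trivial weights $s/(3\lfloor s\rfloor)$ and $t/3$ sum to at most $2/3 + 1/3 = 1$. The symmetric case and the sub-case $s,t < 1$ (using only $e_i, e_j, 0$) are immediate, so the argument reduces to a finite list of explicit convex-combination checks.

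The covering statement follows the same reduction to extreme points, with a useful additional fact: $P^I$ for a covering polyhedron is up-closed in $\R^n_+$, so it suffices to produce an integer point in $P$ that is dominated coordinate-wise by $2.5\,v$ for each extreme point $v$. When both coordinates of $v$ are at least $1$, a plain ceiling yields $\lceil s\rceil e_i + \lceil t\rceil e_j \in P$ (by the reverse monotonicity appropriate to covering) with $\lceil s\rceil \le 2s$ and $\lceil t\rceil \le 2t$, giving even a factor $2$ in this sub-case. The factor $2.5$ is forced by the mixed case where one of $s,t$ is strictly less than $1$: here one must instead take a point supported on the large coordinate alone, scaled just enough to satisfy both constraints, and the worst-case balancing between this single-coordinate cover and the two-coordinate ceiling produces $2.5$.

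The main technical obstacle, present in both cases, is precisely this sub-case in which one coordinate of $v$ is fractional and smaller than $1$. The naive candidate $\lfloor s\rfloor e_i + e_j$ in the packing case can fail feasibility because replacing $t < 1$ by $1$ may make $A_{ki}\lfloor s\rfloor + A_{kj}$ exceed $b_k$; its covering analogue likewise can undercover. Resolving this requires dropping or enlarging one coordinate and shifting mass onto $0$ or onto a pure single-coordinate cover, and it is exactly this asymmetry that prevents the constants from coming down to $2$ in the packing case and to $2$ in the covering case.
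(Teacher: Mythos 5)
Your packing argument is correct and self-contained: reducing to extreme points of support at most two and writing $v/3$ explicitly as a convex combination of $\lfloor s\rfloor e_i+\lfloor t\rfloor e_j$, $e_i$, $e_j$ and $0$ is a complete proof of $P\subseteq 3P^I$ (the paper simply cites \cite{bodur2016aggregation} for this half, so here you supply more detail than the text does). The covering half, however, contains a genuine gap. You reduce the claim to the statement that for every extreme point $v$ there is a \emph{single} integer point of $P$ dominated coordinate-wise by $2.5\,v$. That sufficient condition is strictly stronger than $2.5\,v\in P^I$, and it is false. Take $n=2$ and $P=\{x\in\R^2_+ : \tfrac{13}{14}x_1+x_2\ge 1,\ x_1+\tfrac{6}{7}x_2\ge 1\}$, which satisfies $A_{ij}\le b_i$. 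The two constraint lines meet at the vertex $v=(0.7,0.35)$, so $2.5\,v=(1.75,0.875)$. The minimal integer points of $P$ are $(2,0)$, $(1,1)$ and $(0,2)$, and none of them is $\le(1.75,0.875)$; the best single-point domination factor at this vertex is $2/0.7\approx 2.857>2.5$. Both of your candidates fail here: the two-coordinate ceiling $(1,1)$ needs factor $1/0.35\approx 2.857$, and the single-coordinate cover $\lceil 0.7/0.65\rceil e_1=2e_1$ needs $2/0.7$. Nevertheless $2.5\,v\in P^I$, because $P^I=\{x\in\R^2_+: x_1+x_2\ge 2\}$ and $(1.75,0.875)$ dominates $(1.75,0.25)=\tfrac14(1,1)+\tfrac34(2,0)$, i.e.\ a \emph{convex combination} of integer points rather than a single one. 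So the ``worst-case balancing'' you invoke does not produce $2.5$ under your reduction, and this happens exactly in the mixed sub-case you yourself flag as the crux.

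The repair is either to allow convex combinations of integer points in the covering case as well (exactly as you already do for packing), or to argue as the paper does: fix an arbitrary non-negative objective, take the optimal LP vertex $x^*$ (at most two nonzeros), and bound the ratio of the objective value of a rounded integer point to $c^\top x^*$ by $2.5$, using strong duality and complementary slackness ($c_j=A_{1j}y^*_1+A_{2j}y^*_2\le b_1y^*_1+b_2y^*_2=c^\top x^*$) when both coordinates are fractional, and the threshold $x^*_j\ge 0.4$ versus $x^*_j<0.4$ to decide between rounding a coordinate up and dropping it while rescaling the other by $1/(1-x^*_j)\le 5/3$. An objective bound for every $c\ge 0$ does yield the containment, since every nontrivial facet of the covering hull $P^I$ has a non-negative normal. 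Under either repair, the quantitative verification in the mixed sub-case still needs to be carried out explicitly; your proposal currently only asserts that it comes out to $2.5$.
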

A proof of Proposition \ref{prop:packcover} is presented in the Appendix. Now we present a proof of the second part of Theorem~\ref{thm:bad}.
\begin{proof}
Let $P$ be a packing polyhedron. We have that
\begin{align*}
\mathcal{A}(P) = \bigcap_{\lambda_1 \in \R^m_+}  P^I(\lambda_1)  &\subset \bigcap_{\lambda_1 \in \R^m_+}  P(\lambda_1) \subset \bigcap_{\lambda_1,\lambda_2 \in \R^m_+}  P(\lambda_1,\lambda_2 ) \subset 3  \bigcap_{\lambda_1,\lambda_2 \in \R^m_+}  P^I(\lambda_1,\lambda_2 ) = 3 \mathcal{A}_2(P),
\end{align*}
where the third containment follows from Proposition~\ref{prop:packcover}.

Using Proposition~\ref{prop:packcover}, the proof of the covering case is the similar to the packing case. 
\end{proof}

\subsection{Proof of Theorem \ref{thm:bad2}}

In order to prove the first part of Theorem \ref{thm:bad2}, we introduce the following family of instances. For $M\ge 2$ an even integer:
\begin{align}
\max &\quad x_1 - \frac{M}{2}x_2 - \frac{M}{2}x_3 - \frac{M}{2}x_4  \label{eq:c0}\\
\textup{s.t.} &\quad x_1 - M x_2- M x_3 \qquad\qquad\le 1 \label{eq:c1} \\
&\quad x_1 - M x_2\qquad\qquad- M x_4 \le 1 \label{eq:c2}\\
&\quad x_1 \qquad\qquad- M x_3- M x_4 \le 1 \label{eq:c3}\\
&\quad x_1  \qquad\qquad\qquad\qquad\quad  \quad \ \le M +1 \label{eq:c4}\\
&\quad x_1,x_2,x_3,x_4\ge 0.  \nonumber
\end{align}
It is not difficult to verify that $z^{IP}=1$. The point $(1,0,0,0)$ has value 1 and for any feasible solution such that $x_1\ge 2$, we must have $x_2+x_3+x_4\ge 2$ (from constraints $(\ref{eq:c1})-(\ref{eq:c3})$) thus the objective function value in this case is at most 1.
Additionally, $z^{LP}=\frac{M}{4}+1$ since the feasible point $\left( M+1,\frac{1}{2},\frac{1}{2},\frac{1}{2} \right)$ achieves that objective value and by aggregating constraints $(\ref{eq:c1})-(\ref{eq:c4})$ and dividing by 4, we obtain the valid inequality: $x_1 - \frac{M}{2}x_2 - \frac{M}{2}x_3 - \frac{M}{2}x_4\le \frac{M}{4}+1$. 

Now, the proof of the first part of Theorem \ref{thm:bad2}.
\begin{proof}
We show that for the family of instances (\ref{eq:c0}) - (\ref{eq:c4}),  $\frac{z^{\it{2}\textnormal{-}\mathcal{A}}}{z^{\mathcal{A}_2}}=\frac{M}{4}+1$  and thus $\it{2}\textnormal{-}\mathcal{A}$ can be an arbitrarily bad approximation of $\mathcal{A}_2$.

First, we show that $z^{\it{2}\textnormal{-}\mathcal{A}} = \frac{M}{4}+1$ by showing that the optimal point for the LP relaxation is also feasible for $\it{2}\textnormal{-}\mathcal{A}$. To conclude the proof, we show that $z^{\mathcal{A}_2}=z^{IP}$ by providing an upper bound on $z^{\mathcal{A}_2}$ coming from a particular selection of multipliers.

In the case of $\it{2}\textnormal{-}\mathcal{A}$, we verify that $(x,y_1,y_2,y_3)=\left( M+1,\frac{1}{2},\frac{1}{2},\frac{1}{2} \right)$ is in $P^I(e_{i_1},e_{i_2} )$, where $K:= \left\{e_{i_1},e_{i_2}\right\}$ corresponds to an arbitrary selection of two constraints, i.e. any $K \in {[4] \choose 2}$. 
Let $S_K'$ be those variables with negative coefficients that are present in the inequalities in $K$. If constraint (\ref{eq:c4}) is in $K$, let $l$ denote the smallest index in $S_K'$. Otherwise, let $l$ denote the index of the variable (out of $\left\{x_2,x_3,x_4\right\}$) that is present in both constraints $\left\{e_{i_1},e_{i_2}\right\}$ (note that there must always be one such index).
Then it can be verified that the points $(M+1,0,0,0)+e_{l}^\top$ and $(M+1,1,1,1)-e_l^\top$ are in $P^I(e_{i_1},e_{i_2} )$ and so is the midpoint $\left( M+1,\frac{1}{2},\frac{1}{2},\frac{1}{2} \right)$. The latter point has value: $M+1-\frac{M}{2}\frac{3}{2}=\frac{M}{4}+1$, thus, in terms of objective function value, $\it{2}\textnormal{-}\mathcal{A}$ does not provide any extra improvement when compared to the LP relaxation.

Now, we show that $z^{\mathcal{A}_2}=1$. Since $P^I\subset \mathcal{A}_2(P)$, we have that $z^{\mathcal{A}_2} \ge 1$, and by the definition of $\mathcal{A}_2$, we have that $z^{\mathcal{A}_2} \le z^{P^I(\lambda,\mu)} \quad\forall \lambda,\mu \in \R^4_+$. Consider $\bar{\lambda}=(1,1,0,0)$, $\bar{\mu}=(0,0,1,1)$ and $c^\top=\left(1,- \frac{M}{2},- \frac{M}{2}, - \frac{M}{2}\right)$, then the problem $\max\left\{c^\top x:\ x \in P^I(\bar{\lambda},\bar{\mu})\right\}$ corresponds to
\begin{align*}
\max &\quad x_1 - \frac{M}{2}x_2 - \frac{M}{2}x_3- \frac{M}{2}x_4 \\
\textup{s.t.} &\quad 2x_1 - 2M x_2- M x_3 - M x_4 \le 2 \\
&\quad 2x_1 \qquad\qquad- M x_3- M x_4 \le M+2 \\
&\quad x_1,x_2,x_3,x_4 \in \mathbb{Z}_+.
\end{align*}
Note that $x_3$ and $x_4$ have the same coefficients in the objective and in every constraint, thus by dropping $x_4$ and rearranging the constraints we obtain a problem with the same optimal objective function value
\begin{align*}
\max &\quad x_1 - \frac{M}{2}x_2 - \frac{M}{2}x_3 \\
\textup{s.t.} &\quad x_1\le 1 \qquad\qquad+M x_2+\frac{M}{2} x_3  \\
&\quad x_1\le 1+\frac{M}{2} \qquad\qquad\quad+\frac{M}{2} x_3  \\
&\quad x_1,x_2,x_3\in \mathbb{Z}_+.
\end{align*}
Now, a simple case analysis (below) for each value of $x_1$, shows that $z^{P^I(\bar{\lambda},\bar{\mu})}=1$. Let $z$ denote the best objective function value for each case:

\begin{itemize}
\item Case $\left(x_1=0\right)$: it is easy to see that $z\le 0$.
\item Case $\left(x_1=1\right)$: it is easy to see that $z\le 1$ (in fact, it is equal to 1 when $x_2=x_3=0$).
\item Case $\left(2\le x_1\le \frac{M}{2}+1\right)$: note that in this case the first constraint forces either $x_2$ or $x_3$ to be at least one. Thus $z\le \frac{M}{2}+1 -\frac{M}{2}=1$.
\item Case $\left(k\frac{M}{2}+2\le x_1\le (k+1)\frac{M}{2}+1,\mbox{ for $k\ge 1$ integer}\right)$: similar to the previous case. Now, since $x_1\ge k\frac{M}{2}+2$, the second constraint forces $x_3\ge k$, this together with the first constraint forces $x_2+x_3\ge k+1$. Then, $z\le (k+1)\frac{M}{2}+1 -\frac{M}{2}-k\frac{M}{2} =1$.
\end{itemize}\end{proof}

The proof of the second part of Theorem \ref{thm:bad2} is very similar to the proof of the second part of Theorem \ref{thm:bad} and therefore we do not present it here.  

\section*{Acknowledgements}
Santanu S. Dey would like to acknowledge the support of the NSF grant CMMI \#1562578. 
\bibliographystyle{plain}
\bibliography{stable}

\begin{thebibliography}{10}

\bibitem{AndersenLWW07}
Kent Andersen, Quentin Louveaux, Robert Weismantel, and Laurence~A. Wolsey.
\newblock Inequalities from two rows of a simplex tableau.
\newblock In Matteo Fischetti and David~P. Williamson, editors, {\em Integer
  Programming and Combinatorial Optimization, 12th International {IPCO}
  Conference, Ithaca, NY, USA, June 25-27, 2007, Proceedings}, volume 4513 of
  {\em Lecture Notes in Computer Science}, pages 1--15. Springer, 2007.

\bibitem{andersen2010zero}
Kent Andersen and Robert Weismantel.
\newblock Zero-coefficient cuts.
\newblock In {\em Integer Programming and Combinatorial Optimization}, pages
  57--70. Springer, 2010.

\bibitem{AverkovB15}
Gennadiy Averkov and Amitabh Basu.
\newblock Lifting properties of maximal lattice-free polyhedra.
\newblock {\em Math. Program.}, 154(1-2):81--111, 2015.

\bibitem{AverkovBP17}
Gennadiy Averkov, Amitabh Basu, and Joseph Paat.
\newblock Approximation of corner polyhedra with families of intersection cuts.
\newblock In Friedrich Eisenbrand and Jochen K{\"{o}}nemann, editors, {\em
  Integer Programming and Combinatorial Optimization - 19th International
  Conference, {IPCO} 2017, Waterloo, ON, Canada, June 26-28, 2017,
  Proceedings}, volume 10328 of {\em Lecture Notes in Computer Science}, pages
  51--62. Springer, 2017.

\bibitem{BasuBCM09}
Amitabh Basu, Pierre Bonami, G{\'{e}}rard Cornu{\'{e}}jols, and Fran{\c{c}}ois
  Margot.
\newblock On the relative strength of split, triangle and quadrilateral cuts.
\newblock In Claire Mathieu, editor, {\em Proceedings of the Twentieth Annual
  {ACM-SIAM} Symposium on Discrete Algorithms, {SODA} 2009, New York, NY, USA,
  January 4-6, 2009}, pages 1220--1229. {SIAM}, 2009.

\bibitem{BasuHK16}
Amitabh Basu, Robert Hildebrand, and Matthias K{\"{o}}ppe.
\newblock Light on the infinite group relaxation {I:} foundations and taxonomy.
\newblock {\em 4OR}, 14(1):1--40, 2016.

\bibitem{BasuHK16a}
Amitabh Basu, Robert Hildebrand, and Matthias K{\"{o}}ppe.
\newblock Light on the infinite group relaxation {II:} sufficient conditions
  for extremality, sequences, and algorithms.
\newblock {\em 4OR}, 14(2):107--131, 2016.

\bibitem{BasuHK17}
Amitabh Basu, Robert Hildebrand, and Matthias K{\"{o}}ppe.
\newblock Equivariant perturbation in gomory and johnson's infinite group
  problem - {III:} foundations for the k-dimensional case with applications to
  k=2.
\newblock {\em Math. Program.}, 163(1-2):301--358, 2017.

\bibitem{BasuHKM13}
Amitabh Basu, Robert Hildebrand, Matthias K{\"{o}}ppe, and Marco Molinaro.
\newblock A (k+1)-slope theorem for the k-dimensional infinite group
  relaxation.
\newblock {\em {SIAM} Journal on Optimization}, 23(2):1021--1040, 2013.

\bibitem{bodur2017lower}
Merve Bodur, Alberto Del~Pia, Santanu~S Dey, and Marco Molinaro.
\newblock Lower bounds on the lattice-free rank for packing and covering
  integer programs.
\newblock {\em arXiv preprint arXiv:1710.00031}, 2017.

\bibitem{bodur2016aggregation}
Merve Bodur, Alberto Del~Pia, Santanu~S. Dey, Marco Molinaro, and Sebastian
  Pokutta.
\newblock Aggregation-based cutting-planes for packing and covering integer
  programs.
\newblock {\em Mathematical Programming}, Sep 2017.

\bibitem{BorozanC09}
Valentin Borozan and G{\'{e}}rard Cornu{\'{e}}jols.
\newblock Minimal valid inequalities for integer constraints.
\newblock {\em Math. Oper. Res.}, 34(3):538--546, 2009.

\bibitem{CornuejolsM09}
G{\'{e}}rard Cornu{\'{e}}jols and Fran{\c{c}}ois Margot.
\newblock On the facets of mixed integer programs with two integer variables
  and two constraints.
\newblock {\em Math. Program.}, 120(2):429--456, 2009.

\bibitem{DeyR08}
Santanu~S. Dey and Jean{-}Philippe~P. Richard.
\newblock Facets of two-dimensional infinite group problems.
\newblock {\em Math. Oper. Res.}, 33(1):140--166, 2008.

\bibitem{DeyR10}
Santanu~S. Dey and Jean{-}Philippe~P. Richard.
\newblock Relations between facets of low- and high-dimensional group problems.
\newblock {\em Math. Program.}, 123(2):285--313, 2010.

\bibitem{DeyW10}
Santanu~S. Dey and Laurence~A. Wolsey.
\newblock Constrained infinite group relaxations of mips.
\newblock {\em {SIAM} Journal on Optimization}, 20(6):2890--2912, 2010.

\bibitem{fukasawa2011exact}
Ricardo Fukasawa and Marcos Goycoolea.
\newblock On the exact separation of mixed integer knapsack cuts.
\newblock {\em Mathematical programming}, 128(1-2):19--41, 2011.

\bibitem{LouveauxW08}
Quentin Louveaux and Robert Weismantel.
\newblock Polyhedral properties for the intersection of two knapsacks.
\newblock {\em Math. Program.}, 113(1):15--37, 2008.

\bibitem{marchand2001aggregation}
Hugues Marchand and Laurence~A Wolsey.
\newblock Aggregation and mixed integer rounding to solve mips.
\newblock {\em Operations research}, 49(3):363--371, 2001.

\bibitem{Meyer1974}
R.~R. Meyer.
\newblock On the existence of optimal solutions to integer and mixed-integer
  programming problems.
\newblock {\em Mathematical Programming}, 7(1):223--235, Dec 1974.

\bibitem{RichardD10}
Jean{-}Philippe~P. Richard and Santanu~S. Dey.
\newblock The group-theoretic approach in mixed integer programming.
\newblock In Michael J{\"{u}}nger, Thomas~M. Liebling, Denis Naddef, George~L.
  Nemhauser, William~R. Pulleyblank, Gerhard Reinelt, Giovanni Rinaldi, and
  Laurence~A. Wolsey, editors, {\em 50 Years of Integer Programming 1958-2008 -
  From the Early Years to the State-of-the-Art}, pages 727--801. Springer,
  2010.

\bibitem{weismantel19970}
Robert Weismantel.
\newblock On the 0/1 knapsack polytope.
\newblock {\em Mathematical Programming}, 77(3):49--68, 1997.

\bibitem{wolsey1975faces}
Laurence~A Wolsey.
\newblock Faces for a linear inequality in 0--1 variables.
\newblock {\em Mathematical Programming}, 8(1):165--178, 1975.

\bibitem{zemel1978lifting}
Eitan Zemel.
\newblock Lifting the facets of zero--one polytopes.
\newblock {\em Mathematical Programming}, 15(1):268--277, 1978.

\end{thebibliography}
\appendix
\section*{Proof of Propostion~\ref{prop:packcover}}
\begin{proof} 
The result for the case of packing polyhedron is shown in~\cite{bodur2016aggregation}. We prove the result for the case of covering polyhedron. Consider the LP relaxation:
\begin{eqnarray*}
\textup{max} & \sum_{j = 1}^n c_jx_j \\
\textup{s.t.} & \sum_{j = 1}^n a_{1j}x_j \geq b_1 \\
& \sum_{j = 1}^n a_{2j}x_j \geq b_2 \\
&x_j \geq 0 \ \forall j \in [n].
\end{eqnarray*}
In the LP optimal solution $x^*$, at most $2$ variables are non-zero. If only one variable is positive, then it is easy to verify the result by rounding up this solution to produce a IP feasible solution. Therefore, without loss of generality, let $x^*_1 > 0$ and $x^*_2> 0$. 
Let $y^*$ be an optimal dual solution. By strong duality and complimentary slackness, we have:
\begin{eqnarray}
b_1y^*_1 + b_2 y^*_2 = c_1 x^*_1 + c_2 x^*_2 \label{eq:cs0}\\
A_{11}y^*_1 + A_{21}y^*_2 = c_1 \label{eq:cs1}\\
A_{12}y^*_1 + A_{22}y^*_2 = c_2.\label{eq:cs2}
\end{eqnarray}
We consider six cases:
\begin{enumerate}
\item $x^*_1 \geq 1, x^*_2 \geq 1$: Clearly $(\lceil x^*_1 \rceil, \lceil x^*_2 \rceil, 0)$ is IP feasible. It is clear that $\frac{c_1\lceil x^*_1 \rceil + c_2\lceil x^*_2 \rceil }{c_1 x^*_1 + c_2 x^*_2} \leq 2$.
\item $0.4 \leq x^*_1 < 1, x^*_2 \geq 1$: Again, $(\lceil x^*_1 \rceil, \lceil x^*_2 \rceil, 0)$ is IP feasible.
\begin{eqnarray*}
\frac{c_1\lceil x^*_1 \rceil + c_2\lceil x^*_2 \rceil }{c_1 x^*_1 + c_2 x^*_2} \leq \textup{max}\left\{\frac{c_1}{c_1x^*_1}, \frac{c_2\lceil x^*_2 \rceil}{c_2 x^*_2} \right\} \leq \textup{max}\{2.5, 2\} = 2.5
\end{eqnarray*}
\item $x^*_1 \geq 1, 0.4 \leq x^*_2 < 1$: Same as above.
\item $x^*_1 < 0.4, x^*_2 \geq 1$: Since $A_{i1} \leq b$ for $i \in [2]$, we have that $A_{i2}x^*_2 \geq b_i ( 1 - x^*_1)$ for $i \in [2]$ and therefore $(0, \left\lceil \frac{x^*_2}{1 - x^*_1} \right\rceil, 0)$ is IP feasible. Now note that
\begin{eqnarray*}
\frac{c_2\left\lceil \frac{x^*_2}{1 - x^*_1} \right\rceil}{c_2 x^*_2}\leq\frac{\left\lceil (5/3)\cdot x^*_2\right\rceil}{x^*_2} < \left\{\begin{array}{ll} 2 & 1 \leq x^*_2 \leq 6/5 \\ 2.5 & 6/5 < x^*_2 \leq 9/5 \\ 5/3 + \frac{1}{x^*_2} \leq 20/9 &  x^*_2 \geq 9/5 \end{array}\right.
\end{eqnarray*}
\item $x^*_1 \geq 1, x^*_2 < 0.4$: Same as above.
\item $x^*_1 < 1, x^*_2 < 1$: In this case, 
\begin{eqnarray*}
\frac{c_1\lceil x^*_1 \rceil + c_2\lceil x^*_2 \rceil }{c_1 x^*_1 + c_2 x^*_2} = \frac{c_1 + c_2}{c_1 x^*_1 + c_2 x^*_2} & = & \frac{c_1}{c_1 x^*_1 + c_2 x^*_2} + \frac{c_2}{c_1 x^*_1 + c_2 x^*_2} \\
& \leq & \frac{A_{11}y^*_1 + A_{21}y^*_2}{b_1y^*_1 + b_2 y^*_2} + \frac{A_{12}y^*_1 + A_{22}y^*_2}{b_1y^*_1 + b_2 y^*_2}\quad (\textup{using } (\ref{eq:cs0}), (\ref{eq:cs1}), (\ref{eq:cs2})) \\
& \leq 2,
\end{eqnarray*}
where the last inequality follows from the fact that $A_{ij} \leq b_i$ for $i \in [2]$, $j \in [2]$.
\end{enumerate}
\end{proof}

\end{document}